\author{Dilip Raghavan}
\address{Department of Mathematics \\
National University of Singapore\\
Singapore 119076}
\email{raghavan@math.nus.edu.sg}
\urladdr{http://www.math.nus.edu.sg/$\sim$raghavan}
\author{Saharon Shelah}
\thanks{Both authors were partially supported by European Research Council grant 338821.
Publication 1XXX on Shelah's list.}
\date{\today}
\subjclass[2010]{03E50, 03E05, 03E35, 54D80}
\keywords{cardinal invariants, almost disjoint family, reaping number, revised GCH}
\title[Two results]{Two results on cardinal invariants at uncountable cardinals}
\def\polhk#1{\setbox0=\hbox{#1}{\ooalign{\hidewidth
    \lower1.5ex\hbox{`}\hidewidth\crcr\unhbox0}}}
\newtheorem{Theorem}{Theorem}
\newtheorem{Lemma}[Theorem]{Lemma}
\newtheorem{Cor}[Theorem]{Corollary}
\newtheorem{Question}[Theorem]{Question}
\theoremstyle{definition}
\newtheorem{Def}[Theorem]{Definition}
\theoremstyle{remark}
\newcommand{\restrict}{\upharpoonright}
\newcommand{\bb}{\mathfrak{b}}
\newcommand{\dd}{{\mathfrak{d}}}
\newcommand{\rr}{{\mathfrak{r}}}
\newcommand{\s}{\mathfrak{s}}
\newcommand{\inva}{{\mathfrak{a}}}
\newcommand{\uu}{{\mathfrak{u}}}
\renewcommand{\[}{\left[}
\renewcommand{\]}{\right]}
\newcommand{\PPP}{\mathcal{P}}
\newcommand{\lc}{\left|}
\newcommand{\rc}{\right|}
\newcommand\ZFC{\mathrm{ZFC}}
\newcommand\PCF{\mathrm{PCF}}
\newcommand\GCH{\mathrm{GCH}}
\newcommand\clb{\mathrm{cl}}
\DeclareMathOperator{\otp}{otp}
\DeclareMathOperator{\dom}{dom}
\DeclareMathOperator{\succc}{succ}
\DeclareMathOperator{\cf}{cf}
\DeclareMathOperator{\set}{set}
\newcommand{\Pset}{\mathcal{P}}
\newcommand{\A}{{\mathscr{A}}}
\newcommand{\pr}[2]{\langle #1, #2 \rangle}
\newcommand{\seq}[4]{\langle {#1}_{#2}: #2 #3 #4 \rangle}
\begin{document}
\begin{abstract}
We prove two $\ZFC$ theorems about cardinal invariants above the continuum which are in sharp contrast to well-known facts about these same invariants at the continuum.
It is shown that for an uncountable regular cardinal $\kappa$, $\bb(\kappa) = {\kappa}^{+}$ implies $\inva(\kappa) = {\kappa}^{+}$.
This improves an earlier result of Blass, Hyttinen, and Zhang~\cite{BHZ}.
It is also shown that if $\kappa \geq {\beth}_{\omega}$ is an uncountable regular cardinal, then $\dd(\kappa) \leq \rr(\kappa)$.
This result partially dualizes an earlier theorem of the authors~\cite{cov_z0}.
\end{abstract}
\maketitle
\section{Introduction} \label{sec:intro}
The theory of cardinal invariants at uncountable regular cardinals remains less developed than the theory at $\omega$.
One of the first papers to explore the situation above $\omega$ was by Cummings and Shelah~\cite{sh541}.
In that paper, they considered the direct analogues of the bounding and dominating numbers.
They also considered bounding and domination modulo the club filter, a notion which has no counterpart at $\omega$ but which becomes very natural at uncountable regular cardinals.
Recall the following definitions.
\begin{Def} \label{def:bkappa}
Let $\kappa > \omega$ be a regular cardinal.
 Let $f, g \in {\kappa}^{\kappa}$.
 $f \; {\leq}^{\ast} \; g$ means that $\lc \{\alpha < \kappa: g(\alpha) < f(\alpha)\} \rc < \kappa$ and $f \; {\leq}_{\clb} \; g$ means that $\{\alpha < \kappa: g(\alpha) < f(\alpha)\}$ is non-stationary.
 We say that $F \subset {\kappa}^{\kappa}$ is \emph{$\ast$-unbounded} if $\neg \exists g \in {\kappa}^{\kappa}\forall f \in F\[f \; {\leq}^{\ast} \; g\]$ and we say that $F$ is \emph{$\clb$-unbounded} if $\neg\exists g \in {\kappa}^{\kappa} \forall f \in F\[f \; {\leq}_{\clb} \; g\]$.
 Define
 \begin{align*}
  &\bb(\kappa) = \min\{\lc F \rc: F \subset {\kappa}^{\kappa} \wedge F \ \text{is} \ \ast\text{-unbounded}\},\\
  &{\bb}_{\clb}(\kappa) = \min\{\lc F \rc: F \subset {\kappa}^{\kappa} \wedge F \ \text{is} \ {\clb}\text{-unbounded}\}.
 \end{align*}
 We say that $F \subset {\kappa}^{\kappa}$ is \emph{$\ast$-dominating} if $\forall g \in {\kappa}^{\kappa} \exists f \in F\[g \: {\leq}^{\ast} \: f\]$ and we say that $F$ is \emph{$\clb$-dominating} if $\forall g \in {\kappa}^{\kappa} \exists f \in F\[g \: {\leq}_{\clb} \: f\]$.
 Define
  \begin{align*}
    &\dd(\kappa) = \min\left\{ \lc F \rc: F \subset {\kappa}^{\kappa} \ \text{and} \ F \ \text{is} \ \ast\text{-dominating} \right\}.\\
    &{\dd}_{\clb}(\kappa) = \min\left\{ \lc F \rc: F \subset {\kappa}^{\kappa} \ \text{and} \ F \ \text{is} \ \clb\text{-dominating} \right\}. 
  \end{align*}
\end{Def}
Cummings and Shelah~\cite{sh541} proved that for any regular $\kappa$, ${\kappa}^{+} \leq \cf(\bb(\kappa)) = \bb(\kappa) \leq \cf(\dd(\kappa)) \leq \dd(\kappa) \leq {2}^{\kappa}$, and that these are the only relations between $\bb(\kappa)$ and $\dd(\kappa)$ that are provable in $\ZFC$, thereby generalizing a classical result of Hechler from the case $\kappa = \omega$.
Quite remarkably, they also showed that for every regular $\kappa > \omega$, $\bb(\kappa) = {\bb}_{\clb}(\kappa)$, and that if $\kappa \geq {\beth}_{\omega}$ is regular, then $\dd(\kappa) = {\dd}_{\clb}(\kappa)$.
The question of whether ${\dd}_{\clb}(\kappa) < \dd(\kappa)$ is consistent for any $\kappa$ was left open; as far as we are aware, it remains open.

Other early papers which studied the splitting number at uncountable cardinals revealed interesting differences with the situation at $\omega$.
Recall the following definitions.
\begin{Def} \label{def:rd}
  Let $\kappa > \omega$ be a regular cardinal.
  For $A, B \in \Pset(\kappa)$, $A \: {\subset}^{\ast} \: B$ means $\lc A \setminus B \rc < \kappa$.
  For a family $F \subset {\[\kappa\]}^{\kappa}$ and a set $B \in \Pset(\kappa)$, $B$ is said to \emph{reap} $F$ if for every $A \in F$, $\lc A \cap B \rc = \lc A \cap (\kappa \setminus B) \rc = \kappa$.
  We say that $F \subset {\[\kappa\]}^{\kappa}$ is \emph{unreaped} if there is no $B \in \Pset(\kappa)$ that reaps $F$.
  \begin{align*}
    \rr(\kappa) = \min\left\{ \lc F \rc: F \subset {\[\kappa\]}^{\kappa} \ \text{and} \ F \ \text{is unreaped} \right\}.
  \end{align*}
  A family $F \subset \Pset(\kappa)$ is called a \emph{splitting family} if 
  \begin{align*}
   \forall B \in {\[\kappa\]}^{\kappa} \exists A \in F\[\lc B \cap A \rc = \lc B \cap (\kappa \setminus A) \rc = \kappa\].
  \end{align*}
  \begin{align*}
    \s(\kappa) = \min\left\{ \lc F \rc: F \subset \Pset(\kappa) \ \text{and} \ F \ \text{is a splitting family} \right\}.
  \end{align*}
\end{Def}
For instance, Suzuki~\cite{suzukisplitting} showed that for a regular cardinal $\kappa > \omega$, $\s(\kappa) \geq \kappa$ iff $\kappa$ is strongly inaccessible and $\s(\kappa) \geq {\kappa}^{+}$ iff $\kappa$ is weakly compact.
Zapletal~\cite{jindrasplitting} additionally showed that the statement that there exists some regular uncountable cardinal $\kappa$ for which $\s(\kappa) \geq {\kappa}^{++}$ has large consistency strength, significantly more than a measurable cardinal.
More recently, the authors proved in \cite{cov_z0} that $\s(\kappa) \leq \bb(\kappa)$ for all regular $\kappa > \omega$.
This is in marked contrast to the situation at $\omega$, where it is known that $\s(\omega)$ and $\bb(\omega)$ are independent.
More information about cardinal invariants at $\omega$ can be found in \cite{blasssmall}.

Blass, Hyttinen, and Zhang~\cite{BHZ} is a work about the almost disjointness number at regular uncountable cardinals.
Let us recall the definition of maximal almost disjoint families.
\begin{Def} \label{def:ak}
  Let $\kappa > \omega$ be a regular cardinal.
  $A, B \in {\[\kappa\]}^{\kappa}$ are said to be \emph{almost disjoint} or \emph{a.d.\@} if $\lc A \cap B \rc < \kappa$.
  A family $\A \subset {\[\kappa\]}^{\kappa}$ is said to be \emph{almost disjoint} or \emph{a.d.\@} if the members of $\A$ are pairwise a.d.
  Finally $\A \subset {\[\kappa\]}^{\kappa}$ is called \emph{maximal almost disjoint} or \emph{m.a.d.\@} if $\A$ is an a.d.\@ family, $\lc \A \rc \geq \kappa$, and $\A$ cannot be extended to a larger a.d.\@ family in ${\[\kappa\]}^{\kappa}$.
  \begin{align*}
    \inva(\kappa) = \min\left\{ \lc \A \rc: \A \subset {\[\kappa\]}^{\kappa} \ \text{and} \ \A \ \text{is m.a.d.\@} \right\}.
  \end{align*}
\end{Def}
Blass, Hyttinen, and Zhang~\cite{BHZ} proved that if $\kappa > \omega$ is regular, then $\dd(\kappa) = {\kappa}^{+}$ implies $\inva(\kappa) = {\kappa}^{+}$.
This is potentially different from the situation at $\omega$: it remains an open problem whether $\dd(\omega) = {\aleph}_{1}$ implies $\inva(\omega) = {\aleph}_{1}$, while Shelah~\cite{sh700} showed the consistency of $\dd(\omega) = {\aleph}_{2} < {\aleph}_{3} = \inva(\omega)$ (see also Question \ref{q:b<a}).

There is also a well-developed theory of duality for cardinal invariants at $\omega$.
Thus, for example, $\bb(\omega)$ and $\dd(\omega)$ are dual to each other, while $\s(\omega)$ and $\rr(\omega)$ are duals.
The $\ZFC$ inequality $\s(\omega) \leq \dd(\omega)$ dualizes to the inequality $\bb(\omega) \leq \rr(\omega)$, and indeed even the proof of $\s(\omega) \leq \dd(\omega)$ dualizes to the proof of $\bb(\omega) \leq \rr(\omega)$.
It is possible to make this notion of duality precise using Galois-Tukey connections.
We refer the reader to \cite{blasssmall} for further details about duality of cardinal invariants at $\omega$.
It is unclear at present if there can be a smooth theory of duality for cardinal invariants at uncountable cardinals too.
For example, if we try to na{\"i}vely dualize Suzuki's result mentioned above that $\s(\kappa)$ is small for most $\kappa$, then we would be trying to show that $\rr(\kappa)$ is large for most $\kappa$.
In other words, we might expect to show that if $\kappa$ is not weakly compact, then $\rr(\kappa) = {2}^{\kappa}$.
However it is still an open problem whether the inequality $\rr({\aleph}_{1}) < {2}^{{\aleph}_{1}}$ is consistent (see Question \ref{q:kunen}).
Nevertheless, it is of interest to ask whether for all regular $\kappa > \omega$ the result from \cite{cov_z0} that $\s(\kappa) \leq \bb(\kappa)$ can be dualized to the result that $\dd(\kappa) \leq \rr(\kappa)$.

We present two further $\ZFC$ theorems on cardinal invariants at uncountable regular cardinals in the paper.
Our first result, Theorem \ref{thm:bandakappa}, says that if $\kappa > \omega$ is regular, then $\bb(\kappa) = {\kappa}^{+}$ implies $\inva(\kappa) = {\kappa}^{+}$.
This improves the above mentioned result of Blass, Hyttinen, and Zhang~\cite{BHZ}.
It also shows that $\omega$ is unique among regular cardinals in that it is the only such $\kappa$ where $\bb(\kappa) = {\kappa}^{+} < {\kappa}^{++} = \inva(\kappa)$ is consistent.
Our next result, Theorem \ref{thm:case2}, is a partial dual to our earlier result from \cite{cov_z0}.
It says that for all regular cardinals $\kappa \geq {\beth}_{\omega}$, $\dd(\kappa) \leq \rr(\kappa)$.
Thus for sufficiently large $\kappa$, the invariants $\s(\kappa), \bb(\kappa), \dd(\kappa)$, and $\rr(\kappa)$ are provably comparable and ordered as $\s(\kappa) \leq \bb(\kappa) \leq \dd(\kappa) \leq \rr(\kappa)$.
The proof of our first theorem makes use of the equality $\bb(\kappa) = {\bb}_{\clb}(\kappa)$ of Cummings and Shelah~\cite{sh541} discussed before.
Their theorem that $\dd(\kappa) = {\dd}_{\clb}(\kappa)$ for all regular $\kappa \geq {\beth}_{\omega}$ is not directly used.
However the main idea of the proof of our Theorem \ref{thm:case2} is similar to the main idea in the proof of $\dd(\kappa) = {\dd}_{\clb}(\kappa)$ -- both results use the revised $\GCH$ of Shelah, which is a striking application of $\PCF$ theory exposed in \cite{sh:460}.

Finally one word about our notation, which is standard.
$X \subset Y$ means that $\forall x\[x \in X \implies x \in Y\]$.
So the symbol ``$\subset$'' does not mean ``proper subset''.
If $f$ is a function and $X \subset \dom(f)$, then $f''X$ is the image of $X$ under $f$, that is $f''X = \{f(x): x \in X\}$.
\section{The bounding and almost disjointness numbers: A $\ZFC$ result} \label{sec:bandakappa}
We will quote the following well-known result of Cummings and Shelah~\cite{sh541}.
\begin{Theorem}[see Theorem 6 of \cite{sh541}] \label{thm:bclb}
 For every regular cardinal $k > \omega$, $\bb(\kappa) = {\bb}_{\clb}(\kappa)$.
\end{Theorem}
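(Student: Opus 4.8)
The plan is to establish the two inequalities $\bb(\kappa) \le {\bb}_{\clb}(\kappa)$ and ${\bb}_{\clb}(\kappa) \le \bb(\kappa)$ separately. The first is soft. Since $\kappa$ is regular, every subset of $\kappa$ of size $< \kappa$ is bounded, hence non-stationary; therefore $f \leq^{\ast} g$ implies $f \leq_{\clb} g$ for all $f, g \in {\kappa}^{\kappa}$. Consequently, if a single $g$ witnesses that some $F$ is $\ast$-bounded, then the same $g$ witnesses that $F$ is $\clb$-bounded; contrapositively, every $\clb$-unbounded family is $\ast$-unbounded. Applying this to a $\clb$-unbounded family of size ${\bb}_{\clb}(\kappa)$ yields $\bb(\kappa) \le {\bb}_{\clb}(\kappa)$.

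The substance is the reverse inequality, for which I would begin with a $\leq^{\ast}$-unbounded family $\{f_{\xi} : \xi < \bb(\kappa)\}$, chosen to be $\leq^{\ast}$-increasing with each $f_{\xi}$ increasing and continuous (this is standard, as $\bb(\kappa)$ is regular, and it does not affect $\leq^{\ast}$-unboundedness), and manufacture from it a $\leq_{\clb}$-unbounded family of the same cardinality. The sole obstruction to a direct transfer is that the exceeding set $\{\alpha : g(\alpha) < f_{\xi}(\alpha)\}$ delivered by $\leq^{\ast}$-unboundedness is merely of size $\kappa$, whereas $\leq_{\clb}$-unboundedness demands a \emph{stationary} exceeding set. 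To force stationarity I would fix a partition $\langle S_{i} : i < \kappa \rangle$ of $\kappa$ into $\kappa$ many stationary sets and spread each $f_{\xi}$ across the blocks to obtain $g_{\xi}$. The guiding observation is that if $g$ is bounded on a block $S_{i}$ strictly below the value that $g_{\xi}$ encodes there, then \emph{all} of $S_{i}$ lands in the exceeding set, and a single stationary block already makes $\{\alpha : g(\alpha) < g_{\xi}(\alpha)\}$ stationary. Thus any candidate dominator $g$ whose block-wise suprema $\sup(g''S_{i})$ are below $\kappa$ on all but $< \kappa$ many $i$ is defeated: those suprema define a function in ${\kappa}^{\kappa}$ which, by $\leq^{\ast}$-unboundedness of $\{f_{\xi}\}$, fails to dominate some $f_{\xi}$, producing a fully exceeded block.

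The hard part will be dominators $g$ that grow slowly yet are unbounded on every block, the identity being the decisive test case: through its closure points it $\leq_{\clb}$-dominates any crudely spread family, so the spreading must be arranged with care. Overcoming this is exactly where the relationship between the bounded ideal and the non-stationary ideal on a regular cardinal is genuinely used, and it is the crux of the argument. My intended route is to let the value placed at a point $\alpha$ reflect $f_{\xi}$ at $\alpha$ rather than merely at the block index, and then to recover a single bound below $\kappa$ by a pressing-down argument: for each $i$ set $\rho(i)$ to be the least $\gamma$ with $\{\alpha \in S_{i} : g(\alpha) < \gamma\}$ non-stationary, and use Fodor's lemma together with the normality of the club filter, exploiting the chosen closure of the generating sequence, to show that $\{i : \rho(i) = \kappa\}$ is too small to matter. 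Since $\leq^{\ast}$ ignores fewer than $\kappa$ coordinates, discarding those indices leaves a genuine $\leq^{\ast}$-bound for $\{f_{\xi}\}$, the desired contradiction. It is precisely this reduction to Fodor's lemma, rather than any appeal to $\PCF$ theory, that makes the equality hold for every regular $\kappa > \omega$, in contrast with the dominating analogue ${\dd}_{\clb}(\kappa) = \dd(\kappa)$.
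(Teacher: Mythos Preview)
The paper does not supply its own proof of this theorem: the sentence immediately preceding it reads ``We will quote the following well-known result of Cummings and Shelah,'' and the next line begins the proof of Theorem~\ref{thm:bandakappa}. So there is nothing in the present paper to compare your attempt against; the result is imported from \cite{sh541} as a black box.

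As for the sketch itself, the easy direction $\bb(\kappa)\le\bb_{\clb}(\kappa)$ is fine, and your instinct to attack the reverse inequality via a partition of $\kappa$ into $\kappa$ many stationary sets is exactly the device Cummings and Shelah use. However, the ``hard part'' of your outline is not yet a proof. You never commit to a definition of $g_{\xi}$, and the two candidates you float (constant on each $S_i$ with value $f_\xi(i)$, versus ``reflecting $f_\xi$ at $\alpha$'') pull in different directions; the second is just $g_\xi=f_\xi$, which gives nothing new. More seriously, your regressive function $\rho$ is misdefined: with $\rho(i)$ the least $\gamma$ for which $\{\alpha\in S_i: g(\alpha)<\gamma\}$ is non-stationary, one has $\rho(i)=0$ for every $i$, since the empty set is non-stationary. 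Presumably you intend the complementary condition (the least $\gamma$ such that $\{\alpha\in S_i: g(\alpha)\ge\gamma\}$ is non-stationary), but even with that fix the Fodor step you gesture at does not obviously produce a single $\leq^{\ast}$-bound for the whole family; you would need to explain precisely how the club witnessing $g_\xi\leq_{\clb} g$ interacts with the stationary pieces to pin down a value independent of $\xi$. If you want to turn this into an actual proof, consult Theorem~6 of \cite{sh541} directly: the argument there is short, and the spreading is done so that a putative $\leq_{\clb}$-bound $g$ yields, via the partition, a function $i\mapsto$ (an ordinal extracted from $g\restriction S_i$) that $\leq^{\ast}$-bounds the original family outright, with no separate ``hard case'' to handle.
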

\begin{Theorem} \label{thm:bandakappa}
 Let $\kappa > \omega$ be a regular cardinal.
 If $\bb(\kappa) = {\kappa}^{+}$, then $\inva(\kappa) = {\kappa}^{+}$.
\end{Theorem}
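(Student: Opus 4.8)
The plan is to construct, assuming $\bb(\kappa) = {\kappa}^{+}$, a maximal almost disjoint family of size ${\kappa}^{+}$; since a standard diagonal argument shows that no almost disjoint family of size $\kappa$ is maximal when $\kappa$ is regular uncountable (so $\inva(\kappa) \geq {\kappa}^{+}$ always), this yields $\inva(\kappa) = {\kappa}^{+}$. The construction is a transfinite recursion of length ${\kappa}^{+}$ guided by a scale. Fix once and for all a partition $\kappa = \bigsqcup_{\alpha < \kappa} C_{\alpha}$ into pieces of size $\kappa$, together with the increasing enumerations $e_{\alpha} \colon \kappa \to C_{\alpha}$. Using $\bb(\kappa) = {\kappa}^{+}$ and the ${\kappa}^{+}$-directedness of ${\leq}^{\ast}$ on ${\kappa}^{\kappa}$, and crucially Theorem \ref{thm:bclb} (so that a family of size ${\kappa}^{+}$ witnessing $\bb(\kappa) = {\kappa}^{+}$ may be taken $\clb$-unbounded and not merely $\ast$-unbounded), fix a sequence $\seq{f}{\xi}{<}{{\kappa}^{+}}$ in ${\kappa}^{\kappa}$ that is ${\leq}^{\ast}$-increasing and $\clb$-unbounded; after a routine normalization we may assume each $f_{\xi}$ is non-decreasing.

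The recursion produces an almost disjoint family $\A = \{C_{\alpha} : \alpha < \kappa\} \cup \{A_{\xi} : \xi < {\kappa}^{+}\}$ of size ${\kappa}^{+}$. At stage $\xi$, given the almost disjoint family $\A_{\xi} = \{C_{\alpha} : \alpha < \kappa\} \cup \{A_{\eta} : \eta < \xi\}$ of size at most $\kappa + \lc \xi \rc$, I pick $A_{\xi}$ almost disjoint from $\A_{\xi}$ and ``aligned with $f_{\xi}$'' in the sense that there is $W_{\xi} \in {\[\kappa\]}^{\kappa}$, also chosen during the recursion, such that for every $\alpha \in W_{\xi}$ the set $A_{\xi}$ contains $\{e_{\alpha}(\gamma) : \gamma < f_{\xi}(\alpha)\}$, i.e.\@ all of the portion of $C_{\alpha}$ lying below height $f_{\xi}(\alpha)$, while $A_{\xi}$ is thin on the columns outside $W_{\xi}$. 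The point of the alignment is a catching property: if $X \in {\[\kappa\]}^{\kappa}$ is almost disjoint from every $C_{\alpha}$ and we set $g_{X}(\alpha) := \sup\{\gamma < \kappa : e_{\alpha}(\gamma) \in X\}$ (which is ${< \kappa}$ by regularity, so $g_{X} \in {\kappa}^{\kappa}$), then for every $\alpha \in W_{\xi}$ with $g_{X}(\alpha) < f_{\xi}(\alpha)$ the whole set $X \cap C_{\alpha}$ lies below height $f_{\xi}(\alpha)$ in $C_{\alpha}$ and is therefore contained in $A_{\xi}$; hence, if the set of such $\alpha$ that additionally meet $X$ has size $\kappa$, then $\lc A_{\xi} \cap X \rc = \kappa$.

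Granting the construction, $\A$ is maximal. If $X \in {\[\kappa\]}^{\kappa}$ were almost disjoint from every member of $\A$, then in particular from each $C_{\alpha}$, so $g_{X} \in {\kappa}^{\kappa}$ is defined, and $X$ meets $\kappa$-many of the $C_{\alpha}$ (else $\lc X \rc < \kappa$). For each $\xi < {\kappa}^{+}$, the catching property together with $\lc A_{\xi} \cap X \rc < \kappa$ forces $\{\alpha < \kappa : g_{X}(\alpha) < f_{\xi}(\alpha)\}$ to be non-stationary — this is the step where the club filter on $\kappa$ is used, as explained below — and hence $f_{\xi} \; {\leq}_{\clb} \; g_{X}$. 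But then the single function $g_{X}$ is a ${\leq}_{\clb}$-bound for $\seq{f}{\xi}{<}{{\kappa}^{+}}$, contradicting its $\clb$-unboundedness.

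The heart of the matter, and the main obstacle, is the tension in the recursion between keeping $\A$ almost disjoint and making each $A_{\xi}$ ``thick below $f_{\xi}$'' on a large set $W_{\xi}$ of columns: the naive choice of $A_{\xi}$ as the entire region below $f_{\xi}$ catches every relevant $X$, but then the $A_{\xi}$ form a ${\subset}^{\ast}$-increasing chain and are nowhere near almost disjoint. One must instead choose the thick column sets $W_{\xi}$, and thin the $A_{\xi}$ outside them, so that distinct $A_{\xi}$ have small overlap while still catching every profile $g_{X}$ of a putative witness — that is, so that for each such $g_{X}$ there is $\xi$ with $\{\alpha \in W_{\xi} : g_{X}(\alpha) < f_{\xi}(\alpha)\}$ of size $\kappa$ and meeting the columns occupied by $X$ in a set of size $\kappa$. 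Carrying this out is where the club structure on $\kappa$ — and therefore the passage from ${\bb}(\kappa)$ to ${\bb}_{\clb}(\kappa)$ afforded by Theorem \ref{thm:bclb} — is used in an essential way, and it is the absence of any such structure at $\omega$ that makes the analogous implication fail there.
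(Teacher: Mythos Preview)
Your proposal is an outline with the decisive step --- the actual construction --- openly flagged as ``the heart of the matter'' and left undone. That omission is not a technicality: the ``thick on $W_\xi$'' scheme you sketch is circular as stated. If each $A_\xi$ really contains the full initial segment $\{e_\alpha(\gamma) : \gamma < f_\xi(\alpha)\}$ for every $\alpha \in W_\xi$, then for $\xi \neq \eta$ almost disjointness of $A_\xi$ and $A_\eta$ forces $\lc W_\xi \cap W_\eta \rc < \kappa$, so you would already need an almost disjoint family $\{W_\xi : \xi < \kappa^+\}$ of size $\kappa^+$ in order to build one. (Replacing initial segments by intervals $[h_\xi(\alpha), f_\xi(\alpha))$ breaks your catching property.) Your maximality argument also does not close even granting some construction: from $\lc A_\xi \cap X \rc < \kappa$ and the catching property you can only conclude that $\{\alpha \in W_\xi : g_X(\alpha) < f_\xi(\alpha) \ \text{and} \ X \cap C_\alpha \neq 0\}$ is small, not that $\{\alpha : g_X(\alpha) < f_\xi(\alpha)\}$ is non-stationary, since neither $W_\xi$ nor the set of columns meeting $X$ need contain a club. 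You assert this is ``where the club structure is used'', but no mechanism is supplied.

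The paper's argument uses a different idea. At stage $\delta \geq \kappa$ it fixes (via an elementary chain) a club $E_\delta$ inside the club of ordinals closed under $f_\delta$, and defines $A_\delta$ as the union over $\xi < \kappa$ of the ``fresh'' part $B_{\delta, \xi}$ of the interval $[{\mu}_{E_\delta, \xi}, {\mu}_{E_\delta, \xi+1})$: those points not lying in any $A_{e_\delta(\nu)}$ with $\nu < {\mu}_{E_\delta, \xi}$. Almost disjointness is then immediate. Maximality is a dichotomy plus Fodor: given $B$, choose $\delta$ so that $S = \{\mu : \min(B \setminus (\mu+1)) < f_\delta(\mu)\}$ is stationary. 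If $B$ meets $\kappa$-many of the $B_{\delta, \xi}$ then $A_\delta$ catches $B$; otherwise on a stationary subset of $S \cap E_\delta$ the point $\min(B \setminus (\mu+1))$ lands in the interval $[{\mu}_{E_\delta, \xi}, {\mu}_{E_\delta, \xi+1})$ but was removed from $B_{\delta, \xi}$, hence already lies in some earlier $A_{e_\delta(\nu)}$ with $\nu < \mu$, and Fodor freezes $\nu$ so that this one earlier $A_{e_\delta(\nu)}$ catches $B$. This pressing-down handoff to a predecessor --- rather than insisting that a single $A_\xi$ catch $X$ directly --- is the missing idea in your sketch.
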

\begin{proof}
 The hypothesis and Theorem \ref{thm:bclb} imply that there exists a sequence $\seq{f}{\delta}{<}{{\kappa}^{+}}$ of functions in ${\kappa}^{\kappa}$ with the property that for any $g \in {\kappa}^{\kappa}$, there is a $\delta < {\kappa}^{+}$ such that $\{\alpha < \kappa: g(\alpha) < {f}_{\delta}(\alpha)\}$ is stationary in $\kappa$.
 For any $E \subset \kappa$, if $\otp(E) = \kappa$, then let $\langle {\mu}_{E, \xi}: \xi < \kappa \rangle$ be the increasing enumeration of $E$.
 For each $\delta < {\kappa}^{+}$, let ${C}_{\delta} = \{\alpha < \kappa: \alpha \ \text{is closed under} \ {f}_{\delta}\}$.
 Recall that ${C}_{\delta}$ is a club in $\kappa$.
 Also, fix a sequence $\langle {e}_{\delta}: \kappa \leq \delta < {\kappa}^{+}\rangle$ of bijections ${e}_{\delta}: \kappa \rightarrow \delta$.
 We will construct a sequence $\langle \pr{{A}_{\delta}}{{E}_{\delta}}: \delta < {\kappa}^{+}\rangle$ satisfying the following conditions for each $\delta < {\kappa}^{+}$:
 \begin{enumerate}[series=bandakappa]
  \item
  ${A}_{\delta} \in {\[\kappa\]}^{\kappa}$ and ${E}_{\delta} \subset {C}_{\delta}$ is a club in $\kappa$;
  \item
  $\forall \gamma < \delta\[\lc {A}_{\gamma} \cap {A}_{\delta} \rc < \kappa\]$;
  \item
  if $\kappa \leq \delta$, then ${A}_{\delta} = {\bigcup}_{\xi < \kappa}{{B}_{\delta, \xi}}$, where for each $\xi < \kappa$, ${B}_{\delta, \xi}$ is defined to be
  \begin{align*}
   \left\{ {\mu}_{{E}_{\delta}, \xi} \leq \alpha < {\mu}_{{E}_{\delta}, \xi + 1}: \forall \nu < {\mu}_{{E}_{\delta}, \xi}\[\alpha \notin {A}_{{e}_{\delta}(\nu)}\]\right\}.
  \end{align*}
 \end{enumerate}
Suppose for a moment that such a sequence can be constructed.
Let $\A = \{{A}_{\delta}: \delta < {\kappa}^{+}\}$.
By (1) and (2), $\A$ is an a.\@d.\@ family in ${\[\kappa\]}^{\kappa}$ of size ${\kappa}^{+}$.
We claim that it is maximal.
To see this, fix $B \in {\[\kappa\]}^{\kappa}$.
Define a function $g: \kappa \rightarrow \kappa$ by stipulating that for each $\mu \in \kappa$, $g(\mu) = \sup\left( \{\min(B \setminus \left( \mu + 1 \right))\} \cup \{{f}_{\nu}(\mu): \nu \leq \mu\}\right)$.
Find $\delta < {\kappa}^{+}$ such that $S = \{\mu \in \kappa: g(\mu) < {f}_{\delta}(\mu)\}$ is stationary in $\kappa$.
Note that $\kappa \leq \delta$.
Therefore the consequent of (3) applies to $\delta$.
Let $I = \{\xi < \kappa: {B}_{\delta, \xi} \cap B \neq 0\}$.
If $\lc I \rc = \kappa$, then $\lc {A}_{\delta} \cap B \rc = \kappa$, and we are done.
So assume that $\lc I \rc < \kappa$.
Then $\{{\mu}_{{E}_{\delta}, \xi}: \xi \in I\} \subset {E}_{\delta} \subset \kappa$ and $\lc \{{\mu}_{{E}_{\delta}, \xi}: \xi \in I\} \rc \leq \lc I \rc < \kappa$.
Therefore $\sup\left( \{{\mu}_{{E}_{\delta}, \xi}: \xi \in I\} \right) = {\nu}_{0} < \kappa$.
Now $\{\mu \in {E}_{\delta}: \mu > {\nu}_{0}\}$ is a club in $\kappa$ and $T = S \cap \{\mu \in {E}_{\delta}: \mu > {\nu}_{0}\}$ is stationary in $\kappa$.
Consider any $\mu \in T$.
There exists $\xi \in \kappa \setminus I$ with $\mu = {\mu}_{{E}_{\delta}, \xi}$.
Note that ${B}_{\delta, \xi} \cap B = 0$ because $\xi \notin I$.
On the other hand, ${\mu}_{{E}_{\delta}, \xi} = \mu < \min(B \setminus (\mu + 1)) \leq g(\mu) < {f}_{\delta}(\mu) < {\mu}_{{E}_{\delta}, \xi + 1}$ because $\mu \in S$ and because ${\mu}_{{E}_{\delta}, \xi + 1} \in {C}_{\delta}$.
Since $\min(B \setminus (\mu + 1)) \notin {B}_{\delta, \xi}$, it follows from the definition of ${B}_{\delta, \xi}$ that $\exists \nu < \mu\[ \min(B \setminus (\mu + 1)) \in {A}_{{e}_{\delta}(\nu)}\]$.
Thus we have proved that for each $\mu \in T$, $\exists \nu < \mu\exists \beta \in B \[\mu < \beta \wedge \beta \in {A}_{{e}_{\delta}(\nu)}\]$.
Since $T$ is stationary in $\kappa$, there exist ${T}^{\ast} \subset T$ and $\nu$ such that ${T}^{\ast}$ is stationary in $\kappa$ and for each $\mu \in {T}^{\ast}$, $\nu < \mu$ and $\exists \beta \in B \[\mu < \beta \wedge \beta \in {A}_{{e}_{\delta}(\nu)}\]$.
It now easily follows that $\lc {A}_{{e}_{\delta}(\nu)} \cap B \rc = \kappa$.
This proves the maximality of $\A$.
Since $\lc \A \rc = {\kappa}^{+}$, we have $\inva(\kappa) \leq {\kappa}^{+}$, while standard arguments (see Theorem 1.2 of \cite{kunen}) show that ${\kappa}^{+} \leq \inva(\kappa)$.
Hence we have $\inva(\kappa) = {\kappa}^{+}$.

Thus it suffices to construct a sequence satisfying (1)--(3) above.
Let $\seq{A}{\gamma}{\in}{\kappa}$ be any partition of $\kappa$ into $\kappa$ many pairwise disjoint pieces of size $\kappa$.
For each $\gamma < \kappa$, let ${E}_{\gamma} = {C}_{\gamma}$.
It is clear that the sequence $\langle \pr{{A}_{\gamma}}{{E}_{\gamma}}: \gamma < \kappa \rangle$ satisfies (1)--(3).
Now fix ${\kappa}^{+} > \delta \geq \kappa$ and assume that $\langle \pr{{A}_{\gamma}}{{E}_{\gamma}}: \gamma < \delta \rangle$ satisfying (1)--(3) is given.
We construct ${A}_{\delta}$ and ${E}_{\delta}$ as follows.
Let $\theta$ be a sufficiently large regular cardinal.
Let $x = \{\kappa, \seq{f}{\delta}{<}{{\kappa}^{+}}, \seq{C}{\delta}{<}{{\kappa}^{+}}, \langle {e}_{\delta}: \kappa \leq \delta < {\kappa}^{+}\rangle, \delta, \langle \pr{{A}_{\gamma}}{{E}_{\gamma}}: \gamma < \delta \rangle\}$.
Let $\seq{N}{\xi}{<}{\kappa}$ be such that
\begin{enumerate}[resume=bandakappa]
 \item
 $\forall \xi < \kappa\[{N}_{\xi} \prec H(\theta) \wedge x \in {N}_{\xi}\]$;
 \item
 $\forall \xi < \kappa\[\lc {N}_{\xi} \rc < \kappa \wedge {\mu}_{\xi} = {N}_{\xi} \cap \kappa \in \kappa\]$;
 \item
 $\forall \xi < \xi + 1 < \kappa\[\seq{N}{\zeta}{\leq}{\xi} \in {N}_{\xi + 1}\]$;
 \item
 $\forall \xi < \kappa\[\xi \ \text{is a limit ordinal} \implies {N}_{\xi} = {\bigcup}_{\zeta < \xi}{{N}_{\zeta}}\]$.
\end{enumerate}
Observe that these conditions imply that $\forall \zeta < \xi < \kappa\[{N}_{\zeta} \in {N}_{\xi} \wedge {N}_{\zeta} \subset {N}_{\xi}\]$.
Observe also that ${E}_{\delta} = \{{\mu}_{\xi}: \xi < \kappa\}$ is a club in $\kappa$ and that ${\mu}_{{E}_{\delta}, \xi} = {\mu}_{\xi}$, for all $\xi < \kappa$.
Next for each $\xi < \kappa$, ${C}_{\delta} \in {N}_{\xi}$.
It follows that ${\mu}_{\xi} \in {C}_{\delta}$ because ${C}_{\delta}$ is a club in $\kappa$.
So ${E}_{\delta} \subset {C}_{\delta}$.
Now define ${A}_{\delta} = {\bigcup}_{\xi < \kappa}{{B}_{\delta, \xi}}$, where for each $\xi < \kappa$, ${B}_{\delta, \xi}$ is
\begin{align*}
 \left\{{\mu}_{\xi} \leq \alpha < {\mu}_{\xi + 1}: \forall \nu < {\mu}_{\xi}\[\alpha \notin {A}_{{e}_{\delta}(\nu)}\]\right\}.
\end{align*}
It is clear that (3) is satisfied by definition and that ${A}_{\delta} \subset \kappa$.
So to complete the proof, it suffices to check that $\lc {A}_{\delta} \rc = \kappa$ and that $\forall \gamma < \delta\[\lc {A}_{\gamma} \cap {A}_{\delta} \rc < \kappa\]$.
To see the second statement, fix any $\gamma < \delta$.
Since ${e}_{\delta}: \kappa \rightarrow \delta$ is a bijection, we can find $\nu \in \kappa$ with ${e}_{\delta}(\nu) = \gamma$.
Find $\zeta < \kappa$ with $\nu < {\mu}_{\zeta}$.
Consider any $\xi < \kappa$ so that $\zeta \leq \xi$.
Then $\nu < {\mu}_{\zeta} \leq {\mu}_{\xi}$.
It follows that ${A}_{\gamma} \cap {B}_{\delta, \xi} = {A}_{{e}_{\delta}(\nu)} \cap {B}_{\delta, \xi} = 0$.
Therefore, ${A}_{\gamma} \cap {A}_{\delta} = {\bigcup}_{\xi < \kappa}{\left( {A}_{\gamma} \cap {B}_{\delta, \xi} \right)} = {\bigcup}_{\xi < \zeta}{\left( {A}_{\gamma} \cap {B}_{\delta, \xi} \right)} \subset {\bigcup}_{\xi < \zeta}{{B}_{\delta, \xi}}$.
For each $\xi < \zeta$, $\lc {B}_{\delta, \xi} \rc < \kappa$.
So ${\bigcup}_{\xi < \zeta}{{B}_{\delta, \xi}}$ is the union of $\leq \lc \zeta \rc \leq \zeta < \kappa$ many sets each of size $< \kappa$.
Since $\kappa$ is regular, we conclude that $\lc {\bigcup}_{\xi < \zeta}{{B}_{\delta, \xi}} \rc < \kappa$.
So $\lc {A}_{\gamma} \cap {A}_{\delta} \rc < \kappa$, as needed.

Finally we check that for each $\xi < \kappa$, ${B}_{\delta, \xi} \neq 0$.
This will imply that $\lc {A}_{\delta} \rc = \kappa$.
Fix any $\xi < \kappa$.
Note that for each $\nu < {\mu}_{\xi}$, $\lc {A}_{{e}_{\delta}({\mu}_{\xi})} \cap {A}_{{e}_{\delta}(\nu)} \rc < \kappa$.
Therefore ${R}_{\xi} = {\bigcup}_{\nu < {\mu}_{\xi}}{\left( {A}_{{e}_{\delta}({\mu}_{\xi})} \cap {A}_{{e}_{\delta}(\nu)} \right)}$ is the union of at most $\lc {\mu}_{\xi} \rc \leq {\mu}_{\xi} < \kappa$ many sets each having size $< \kappa$.
Since $\kappa$ is regular, it follows that $\lc {R}_{\xi} \rc < \kappa$.
Hence there is an $\alpha \in {A}_{{e}_{\delta}({\mu}_{\xi})} \setminus {R}_{\xi}$ with ${\mu}_{\xi} \leq \alpha$ because $\lc {A}_{{e}_{\delta}({\mu}_{\xi})} \rc = \kappa$.
Since ${N}_{\xi + 1} \prec H(\theta)$ and since all the relevant parameters belong to ${N}_{\xi + 1}$, we conclude that there exists $\alpha \in {N}_{\xi + 1}$ such that $\alpha \in \kappa$, ${\mu}_{\xi} \leq \alpha$, and $\forall \nu \in {\mu}_{\xi}\[\alpha \notin {A}_{{e}_{\delta}(\nu)}\]$.
Now we have that ${\mu}_{\xi} \leq \alpha < {\mu}_{\xi + 1}$ and so $\alpha \in {B}_{\delta, \xi}$.
This shows that ${B}_{\delta, \xi} \neq 0$ and concludes the proof.
\end{proof}
\section{The reaping and dominating numbers: an application of PCF theory} \label{sec:randd}
We begin with a well-known fact, whose proof we include for completeness.
\begin{Def} \label{def:es}
  Let $\kappa > \omega$ be a regular cardinal.
  If $A \in {\[\kappa\]}^{\kappa}$, then we let ${e}_{A}: \kappa \rightarrow A$ be the order isomorphism from $\pr{\kappa}{\in}$ to $\pr{A}{\in}$.
  We also define a function ${s}_{A}: \kappa \rightarrow A$ by setting ${s}_{A}(\alpha) = \min(A \setminus (\alpha + 1))$, for each $\alpha \in \kappa$.
  We also write $\lim(\kappa) = \{\alpha < \kappa: \alpha \ \text{is a limit ordinal} \}$ and $\succc(\kappa) = \{\alpha < \kappa: \alpha \ \text{is a successor ordinal} \}$.
\end{Def}
\begin{Lemma}[Folklore] \label{lem:risbig}
  If $\kappa > \omega$ is a regular cardinal, then $\rr(\kappa) \geq {\kappa}^{+}$.
\end{Lemma}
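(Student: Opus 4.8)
The plan is to establish the equivalent assertion that \emph{every} family $F \subseteq {\[\kappa\]}^{\kappa}$ with $\lc F \rc \leq \kappa$ is reaped by some $B \in \Pset(\kappa)$; since $\rr(\kappa)$ is the least size of an unreaped family, this gives $\rr(\kappa) \geq {\kappa}^{+}$ at once. This is the obvious lift to $\kappa$ of the classical fact that $\rr(\omega) > {\aleph}_{0}$. So fix a nonempty such $F$ and enumerate it, with repetitions if necessary, as $\seq{A}{\xi}{<}{\kappa}$. First I would set up the bookkeeping: using $\lc \kappa \times \kappa \times 2 \rc = \kappa$, fix a bijection $\alpha \mapsto \pr{g(\alpha)}{\pr{{\xi}_{\alpha}}{{i}_{\alpha}}}$ of $\kappa$ onto $\kappa \times \kappa \times 2$. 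The only feature used is that for every $\pr{\xi}{i} \in \kappa \times 2$ the set $S(\xi, i) = \{\alpha < \kappa: \pr{{\xi}_{\alpha}}{{i}_{\alpha}} = \pr{\xi}{i}\}$ has size $\kappa$, and hence, $\kappa$ being regular, is cofinal in $\kappa$.

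Next, by recursion on $\alpha < \kappa$ I would build $\subseteq$-increasing sequences $\seq{b}{\alpha}{<}{\kappa}$ and $\seq{c}{\alpha}{<}{\kappa}$ of subsets of $\kappa$, continuous at limit stages, such that ${b}_{\alpha} \cap {c}_{\alpha} = 0$ and $\lc {b}_{\alpha} \rc, \lc {c}_{\alpha} \rc \leq \lc \alpha \rc$ for every $\alpha$; by regularity of $\kappa$ each ${b}_{\alpha}$ and ${c}_{\alpha}$ is then bounded in $\kappa$. Start with ${b}_{0} = {c}_{0} = 0$ and take unions at limits. At a successor stage $\alpha + 1$, put ${\gamma}_{\alpha} = \sup\left( {b}_{\alpha} \cup {c}_{\alpha} \cup \{\alpha\} \right) < \kappa$ and ${\eta}_{\alpha} = {s}_{{A}_{{\xi}_{\alpha}}}({\gamma}_{\alpha}) = \min\left( {A}_{{\xi}_{\alpha}} \setminus ({\gamma}_{\alpha} + 1) \right)$, which exists since ${A}_{{\xi}_{\alpha}}$ is unbounded in $\kappa$; then set ${b}_{\alpha + 1} = {b}_{\alpha} \cup \{{\eta}_{\alpha}\}$, ${c}_{\alpha + 1} = {c}_{\alpha}$ if ${i}_{\alpha} = 0$, and ${b}_{\alpha + 1} = {b}_{\alpha}$, ${c}_{\alpha + 1} = {c}_{\alpha} \cup \{{\eta}_{\alpha}\}$ if ${i}_{\alpha} = 1$. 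Because ${\eta}_{\alpha} > {\gamma}_{\alpha} \geq \sup\left( {b}_{\alpha} \cup {c}_{\alpha} \right)$, the new point lies outside the set to which it is not added, so disjointness and the cardinality bounds are preserved.

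Finally I would let $B = {\bigcup}_{\alpha < \kappa}{{b}_{\alpha}}$ and $C = {\bigcup}_{\alpha < \kappa}{{c}_{\alpha}}$. A routine induction using monotonicity and ${b}_{\alpha} \cap {c}_{\alpha} = 0$ gives $B \cap C = 0$, so $C \subseteq \kappa \setminus B$. To see $B$ reaps $F$, fix $\xi < \kappa$. For every $\alpha \in S(\xi, 0)$ we have ${\eta}_{\alpha} \in {A}_{\xi} \cap B$; moreover if $\alpha < {\alpha}'$ both lie in $S(\xi, 0)$, then ${\eta}_{\alpha} \in {b}_{\alpha + 1} \subseteq {b}_{{\alpha}'}$ forces ${\gamma}_{{\alpha}'} \geq {\eta}_{\alpha}$, whence ${\eta}_{{\alpha}'} = {s}_{{A}_{\xi}}({\gamma}_{{\alpha}'}) > {\gamma}_{{\alpha}'} \geq {\eta}_{\alpha}$. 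Thus $\alpha \mapsto {\eta}_{\alpha}$ is strictly increasing, hence injective, on $S(\xi, 0)$, and since $\lc S(\xi, 0) \rc = \kappa$ we get $\lc {A}_{\xi} \cap B \rc = \kappa$. Running the symmetric argument on $S(\xi, 1)$ yields $\lc {A}_{\xi} \cap C \rc = \kappa$, hence $\lc {A}_{\xi} \cap (\kappa \setminus B) \rc = \kappa$. So $B$ reaps $F$, completing the proof. I do not foresee a genuine obstacle here; the points demanding care are arranging that each pair $\pr{\xi}{i}$ is revisited cofinally often, and the repeated appeals to the regularity of $\kappa$ that keep the approximations ${b}_{\alpha}, {c}_{\alpha}$ bounded (so ${\gamma}_{\alpha}$ is well defined) and that upgrade ``a subset of size $\kappa$'' to ``a cofinal subset.''
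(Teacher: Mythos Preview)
Your proof is correct, but it takes a different route from the paper's. The paper's argument is non-recursive: it builds a single club $C$ that is diagonally closed under all the successor functions ${s}_{{A}_{\alpha}}$, chops $\kappa$ into the intervals $\[{e}_{C}(\xi), {e}_{C}(\xi+1))$, and lets $B$ be the union of the limit-indexed intervals. Each interval above $\alpha$ meets ${A}_{\alpha}$ by closure, so both $B$ and its complement meet every ${A}_{\alpha}$ in size $\kappa$. Your construction instead walks through $\kappa$ step by step, using bookkeeping on $\kappa \times \kappa \times 2$ to place one fresh point of the appropriate ${A}_{\xi}$ into $B$ or into its complement at each successor stage; regularity of $\kappa$ keeps the partial sets bounded so the recursion never stalls. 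Both are perfectly standard. Your approach is arguably more elementary (no clubs needed) and more obviously a straight lift of the $\omega$-case. The paper's approach, however, is chosen with an eye to what follows: the club-and-interval machinery (the functions ${s}_{A}$, the sets $\set({E}_{2}, {E}_{1})$) is exactly what drives Lemma~\ref{lem:case1} and Theorem~\ref{thm:case2}, and as the paper remarks immediately after the proof, the club argument transparently yields the sharper bound $\rr(\kappa) \geq \bb(\kappa)$, which your pointwise recursion does not give without further work.
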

\begin{proof}
Let $F \subset {\[\kappa\]}^{\kappa}$ be a family with $\lc F \rc \leq \kappa$.
We must find a $B \in \Pset(\kappa)$ which reaps $F$.
If $F$ is empty, then $B = \kappa$ will work.
So assume $F$ is non-empty.
Let $\{{A}_{\alpha}: \alpha < \kappa \}$ enumerate $F$, possibly with repetitions.
For each $\alpha < \kappa$, let ${C}_{\alpha} = \{\delta < \kappa: \delta \ \text{is closed under} \ {s}_{{A}_{\alpha}} \}$.
Then $C = \{\delta < \kappa: \forall \alpha < \delta\[\delta \in {C}_{\alpha} \]\}$ is a club in $\kappa$.
For each $\xi \in \kappa$, let ${B}_{\xi} = \{ \zeta < {e}_{C}(\xi + 1): {e}_{C}(\xi) \leq \zeta \}$.
Note that for all $\alpha < {e}_{C}(\xi + 1)$, ${A}_{\alpha} \cap {B}_{\xi} \neq 0$.
Also for any distinct $\xi, \xi' \in \kappa$, ${B}_{\xi} \cap {B}_{\xi'} = 0$.
Put $B = \bigcup\left\{ {B}_{\xi}: \xi \in \lim(\kappa) \right\}$.
Then $B \in \Pset(\kappa)$ and since for each $\alpha < \kappa$ and each $\xi \in \lim(\kappa) \setminus \alpha$, ${A}_{\alpha} \cap {B}_{\xi} \neq 0$, $\lc {A}_{\alpha} \cap B \rc = \kappa$, for all $\alpha < \kappa$.
Furthermore, $\bigcup\left\{ {B}_{\xi'}: \xi' \in \succc(\kappa) \right\} \subset \kappa \setminus B$, and since for each $\alpha < \kappa$ and for each $\xi' \in \succc(\kappa) \setminus \alpha$, ${A}_{\alpha} \cap {B}_{\xi'} \neq 0$, $\lc {A}_{\alpha} \cap \left( \kappa \setminus B \right) \rc = \kappa$, for all $\alpha < \kappa$.
Thus $B$ reaps $F$.
\end{proof}
The above proof really shows that $\rr(\kappa) \geq \bb(\kappa)$.
However we will not need this in what follows.
The proof of the main theorem is broken into two cases.
For the remainder of this section, let $\kappa > \omega$ be a fixed regular cardinal.
The crucial definition is the following.
\begin{Def} \label{def:set}
  Let ${E}_{2} \subset {E}_{1}$ both be clubs in $\kappa$.
  For each $\xi \in \kappa$, define $\set({E}_{1}, \xi) = \left\{ \zeta < {s}_{{E}_{1}}(\xi): \xi \leq \zeta \right\}$.
  Define $\set({E}_{2}, {E}_{1}) = \bigcup\left\{ \set({E}_{1}, \xi): \xi \in {E}_{2} \right\}$.
\end{Def}
\begin{Lemma} \label{lem:case1}
  Suppose that $F \subset {\[\kappa\]}^{\kappa}$ is an unreaped family with $\lc F \rc = \rr(\kappa)$.
  Assume there is a club ${E}_{1} \subset \kappa$ such that for each club $E \subset {E}_{1}$, there exists $A \in F$ with $A \: {\subset}^{\ast} \: \set(E, {E}_{1})$.
  Then $\dd(\kappa) \leq \rr(\kappa)$.
\end{Lemma}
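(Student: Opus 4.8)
The plan is to construct, from the family $F$ and the fixed club ${E}_{1}$, a $\ast$-dominating subfamily of ${\kappa}^{\kappa}$ of size at most $\lc F \rc = \rr(\kappa)$; since $\dd(\kappa)$ is by definition the least cardinality of a $\ast$-dominating family, this gives $\dd(\kappa) \le \rr(\kappa)$. (Only the displayed hypothesis about ${E}_{1}$ and the equality $\lc F \rc = \rr(\kappa)$ will be used; unreapedness of $F$ plays no role in this case.) For each $A \in F$ I define ${h}_{A} \in {\kappa}^{\kappa}$ by letting ${h}_{A}(\alpha)$ be the least element of $A$ that is $\geq {s}_{{E}_{1}}(\alpha)$, that is, ${h}_{A}(\alpha) = \min\left( A \setminus {s}_{{E}_{1}}(\alpha) \right)$; this is well defined because $\lc A \rc = \kappa$. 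I will show that $\left\{ {h}_{A} : A \in F \right\}$ is $\ast$-dominating, which finishes the proof since this family has size at most $\lc F \rc = \rr(\kappa)$.

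So fix an arbitrary $g \in {\kappa}^{\kappa}$; the task is to produce $A \in F$ with $g \: {\leq}^{\ast} \: {h}_{A}$. Let ${D}_{g} = \left\{ \eta < \kappa : \forall \beta < \eta\,(g(\beta) < \eta) \right\}$ be the club of ordinals closed under $g$, and set ${E}_{g} = {E}_{1} \cap {D}_{g}$, which is a club in $\kappa$ contained in ${E}_{1}$. Applying the hypothesis of the lemma to the club ${E}_{g}$ produces $A \in F$ with $A \: {\subset}^{\ast} \: \set({E}_{g}, {E}_{1})$. Unravelling Definition \ref{def:set}, $\set({E}_{g}, {E}_{1}) = \bigcup\left\{ [\xi, {s}_{{E}_{1}}(\xi)) : \xi \in {E}_{g} \right\}$: it is the union of those blocks $[\xi, {s}_{{E}_{1}}(\xi))$ of the partition of $\kappa$ determined by ${E}_{1}$ whose left endpoint $\xi$ lies in ${E}_{g}$.

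It then remains to check $g \: {\leq}^{\ast} \: {h}_{A}$ for this particular $A$. Since $\kappa$ is regular and $\lc A \setminus \set({E}_{g}, {E}_{1}) \rc < \kappa$, there is ${\beta}^{\ast} < \kappa$ with $A \setminus {\beta}^{\ast} \subset \set({E}_{g}, {E}_{1})$. Fix any $\alpha \geq {\beta}^{\ast}$ and put ${x}_{0} = {h}_{A}(\alpha)$. Then ${x}_{0} \in A$ and ${x}_{0} \geq {s}_{{E}_{1}}(\alpha) > \alpha \geq {\beta}^{\ast}$, so ${x}_{0} \in \set({E}_{g}, {E}_{1})$; choose $\xi \in {E}_{g}$ with $\xi \leq {x}_{0} < {s}_{{E}_{1}}(\xi)$. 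If $\xi \leq \alpha$, then, since ${s}_{{E}_{1}}$ is non-decreasing and ${x}_{0} \geq {s}_{{E}_{1}}(\alpha)$, we would have ${s}_{{E}_{1}}(\xi) \leq {s}_{{E}_{1}}(\alpha) \leq {x}_{0}$, contradicting ${x}_{0} < {s}_{{E}_{1}}(\xi)$; hence $\alpha < \xi$. As $\xi \in {D}_{g}$ and $\alpha < \xi$, it follows that $g(\alpha) < \xi \leq {x}_{0} = {h}_{A}(\alpha)$. Since $\alpha \geq {\beta}^{\ast}$ was arbitrary, $\left\{ \alpha < \kappa : {h}_{A}(\alpha) < g(\alpha) \right\} \subset {\beta}^{\ast}$ has cardinality $< \kappa$, so $g \: {\leq}^{\ast} \: {h}_{A}$, as wanted.

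The argument hinges on two choices. The first is the ``reading-off'' function ${h}_{A}$: because $A$ lies almost entirely within the ${E}_{g}$-indexed blocks, ${h}_{A}(\alpha)$ is compelled to step over every ${E}_{1}$-block above $\alpha$ that is not indexed by ${E}_{g}$ and to land inside the next ${E}_{g}$-block. The second is the choice of ${E}_{g}$ as the set of $g$-closure points inside ${E}_{1}$, which ensures that the left endpoint of that block -- and hence ${h}_{A}(\alpha)$ -- already overshoots $g(\alpha)$. I do not anticipate a real obstacle at this point: the hypothesis of Lemma \ref{lem:case1} has been isolated precisely so that this short argument works in $\ZFC$, with no appeal to $\PCF$ theory; the genuine difficulty of the main theorem lies in the complementary case, where $\set(E, {E}_{1})$ has to be handled through its complement and the revised $\GCH$ comes into play. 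The only bookkeeping subtleties here are that $\set({E}_{g}, {E}_{1})$ must genuinely be a union of full ${E}_{1}$-blocks -- which is why the hypothesis is phrased with ${E}_{g} \subset {E}_{1}$ rather than with an arbitrary club -- and that the inclusion $A \subset \set({E}_{g}, {E}_{1})$ holds only up to a bounded error, which the threshold ${\beta}^{\ast}$ absorbs.
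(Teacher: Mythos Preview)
Your proof is correct and follows essentially the same route as the paper: define, for each $A\in F$, a function that jumps past the next ${E}_{1}$-point and then into $A$, and apply the hypothesis to the club of $g$-closure points inside ${E}_{1}$. The only difference is cosmetic: the paper sets ${g}_{A}(\beta)={s}_{A}({s}_{{E}_{1}}(\beta))=\min\bigl(A\setminus({s}_{{E}_{1}}(\beta)+1)\bigr)$, while you take ${h}_{A}(\alpha)=\min\bigl(A\setminus{s}_{{E}_{1}}(\alpha)\bigr)$; either choice makes the key inequality $\alpha<\xi$ go through.
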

\begin{proof}
  For each $A \in F$ define a function ${g}_{A}: \kappa \rightarrow \kappa$ as follows.
  Given $\beta \in \kappa$, ${g}_{A}(\beta) = {s}_{A}({s}_{{E}_{1}}(\beta))$.
  Then $\lc \{{g}_{A}: A \in F \} \rc \leq \lc F \rc = \rr(\kappa)$, and we will check that this is a dominating family of functions.
  To this end, fix any $f \in {\kappa}^{\kappa}$.
  Put
  \begin{align*}
    {E}_{f} = \left\{ \xi \in {E}_{1}: \xi \ \text{is closed under} \ f \right\}.
  \end{align*}
  Then ${E}_{f} \subset {E}_{1}$ and it is a club in $\kappa$.
  By hypothesis there exist $A \in F$ and $\delta \in \kappa$ with $A \setminus \delta \subset \set({E}_{f}, {E}_{1})$.
  We claim that for any $\zeta \in \kappa$, if $\zeta \geq \delta$, then $f(\zeta) < {g}_{A}(\zeta)$.
  Indeed suppose $\delta \leq \zeta < \kappa$ is given.
  Let $\gamma = {s}_{{E}_{1}}(\zeta) > \zeta$ and let ${g}_{A}(\zeta) = \beta = {s}_{A}({s}_{{E}_{1}}(\zeta))$.
  Then $\beta \in A$ and $\delta \leq \zeta < {s}_{{E}_{1}}(\zeta) < \beta$.
  Thus $\beta \in \set({E}_{f}, {E}_{1})$.
  Let $\zeta' \in {E}_{f}$ be such that $\zeta' \leq \beta < {s}_{{E}_{1}}(\zeta')$.
  It could not be the case that $\zeta' < \gamma$, for if that were the case, then the inequality $\beta < {s}_{{E}_{1}}(\zeta') \leq \gamma = {s}_{{E}_{1}}(\zeta) < \beta$ would be true, which is impossible. 
  Therefore $\gamma \leq \zeta'$ and since $\zeta < \gamma \leq \zeta'$ and $\zeta'$ is closed under $f$, we have $f(\zeta) < \zeta' \leq \beta = {g}_{A}(\zeta)$, as claimed.
  Hence $f \: {\leq}^{\ast} \: {g}_{A}$.
  As $f \in {\kappa}^{\kappa}$ was arbitrary, this proves that $\{{g}_{A}: A \in F \}$ is dominating, and so $\dd(\kappa) \leq \lc \{{g}_{A}: A \in F \} \rc \leq \rr(\kappa)$. 
\end{proof}
The proof in the case when the hypothesis of Lemma \ref{lem:case1} fails will make use of Shelah's Revised $\GCH$, which is a theorem of $\ZFC$.
Let us recall the definition of various notions that are relevant to the revised $\GCH$.
\begin{Def} \label{def:weakpower}
  Let $\kappa$ and $\lambda$ be cardinals.
  Define ${\lambda}^{\[\kappa\]}$ to be
  \begin{align*}
    \min\left\{ \lc \PPP \rc: \PPP \subset {\[\lambda\]}^{\leq \kappa} \ \text{and} \ \forall u \in {\[\lambda\]}^{\kappa}\exists{\PPP}_{0} \subset \PPP\[\lc {\PPP}_{0} \rc < \kappa \ \text{and} \ u = \bigcup {\PPP}_{0} \] \right\}.
  \end{align*}
  The operation ${\lambda}^{\[\kappa\]}$ is sometimes referred to as the \emph{weak power}.
\end{Def}
The following remarkable $\ZFC$ result was obtained by Shelah in \cite{sh:460} as one of the many fruits of his $\PCF$ theory.
A nice exposition of its proof may be also be found in Abraham and Magidor~\cite{handbookPCF}.
Another relevant reference is Shelah~\cite{sh:829}.
\begin{Theorem}[The Revised $\GCH$]\label{thm:revisedgch}
  If $\theta$ is a strong limit uncountable cardinal, then for every $\lambda \geq \theta$, there exists $\sigma < \theta$ such that for every $\sigma \leq \kappa < \theta$, ${\lambda}^{\[\kappa\]} = \lambda$.
\end{Theorem}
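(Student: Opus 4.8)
The plan is to prove this by induction on $\lambda \geq \theta$, with the strong limit cardinal $\theta$ fixed throughout, converting the combinatorial weak power ${\lambda}^{\[\kappa\]}$ into a covering number that $\PCF$ theory can control. First I would record the trivial lower bound ${\lambda}^{\[\kappa\]} \geq \lambda$: for any admissible family $\PPP$, every $\alpha < \lambda$ lies in some member (choose a $\kappa$-sized $u \ni \alpha$ and write it as a union of $< \kappa$ members), so $\bigcup \PPP = \lambda$; since each member has size $\leq \kappa < \lambda$ this forces $\lc \PPP \rc \geq \lambda$. Hence the entire content is the upper bound ${\lambda}^{\[\kappa\]} \leq \lambda$. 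Next I would compare ${\lambda}^{\[\kappa\]}$ with Shelah's covering number $\cov(\lambda, {\kappa}^{+}, {\kappa}^{+}, 2)$ — the least size of a family $\PPP \subset {\[\lambda\]}^{\leq \kappa}$ such that every $a \in {\[\lambda\]}^{\leq \kappa}$ lies in a union of $\leq \kappa$ members of $\PPP$ — showing these agree up to the base cases, so that it suffices to prove $\cov(\lambda, {\kappa}^{+}, {\kappa}^{+}, 2) = \lambda$ for all sufficiently large regular $\kappa < \theta$.

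The induction then splits on the nature of $\lambda$. The successor case $\lambda = {\mu}^{+}$ is routine: since $\kappa < \theta \leq \mu$ and $\cf(\lambda) = \lambda > \kappa$, every $u \in {\[\lambda\]}^{\kappa}$ is bounded below $\lambda$, so a covering family is obtained by pasting together, over the $\lambda$ ordinals $\gamma < \lambda$ of size $\leq \mu$, the covering families for $\gamma$ furnished by the inductive hypothesis applied to $\mu$. This yields $\cov \leq \lambda \cdot \mu = \lambda$ and lets one reuse the same threshold $\sigma$ as for $\mu$. The genuinely hard case is $\lambda$ a singular limit cardinal, and this is where $\PCF$ theory is indispensable.

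In the singular case I would invoke Shelah's cov-versus-pp theorem. When $\cf(\lambda) > \kappa$ the covering number is continuous, $\cov(\lambda, {\kappa}^{+}, {\kappa}^{+}, 2) = \sup_{\mu < \lambda} \cov(\mu, {\kappa}^{+}, {\kappa}^{+}, 2)$, and the inductive hypothesis together with the regularity of the supremum delivers the value $\lambda$. When $\cf(\lambda) \leq \kappa$ one has $\cov(\lambda, {\kappa}^{+}, {\kappa}^{+}, 2) = \mathrm{pp}_{{\kappa}^{+}}(\lambda)$, so the task reduces to the pseudopower bound $\mathrm{pp}_{{\kappa}^{+}}(\lambda) = \lambda$. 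To obtain this I would fix a set $A$ of regular cardinals cofinal in $\lambda$ with $\lc A \rc = \cf(\lambda) \leq \kappa$ and $\min(A) > \kappa$; because $A \subset (\kappa, \lambda)$ we get $\lc A \rc < \min(A)$, which is exactly the hypothesis under which the structural $\PCF$ theory applies — existence of generators for $\mathrm{pcf}(A)$, the localization theorem, and the no-hole theorem. Choosing $\kappa$ large (this is where the threshold $\sigma < \theta$ is spent) arranges $\cf(\lambda) < \kappa$ and pushes $\min(A)$ above $\kappa$, so that only boundedly many $\kappa$ can witness a genuine jump in $\mathrm{pp}$.

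The main obstacle is precisely this pcf bound in the singular case: showing that $\max \mathrm{pcf}(A) \leq {\lambda}^{+}$, equivalently that $\mathrm{pp}_{{\kappa}^{+}}(\lambda)$ does not exceed $\lambda$, for all large $\kappa$. Here the strong limit hypothesis on $\theta$ does the essential absorbing: since $\lambda \geq \theta$ and $\theta$ is strong limit, ${2}^{\kappa} < \theta \leq \lambda$ for every $\kappa < \theta$, so crude $\PCF$ bounds such as $\lc \mathrm{pcf}(A) \rc \leq {2}^{\lc A \rc}$ and the number of generators are swallowed by $\lambda$, while localization confines the relevant cofinal sequences to subsets of $A$ of size $< \kappa$ that the inductive hypothesis already controls. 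Combining the generators with localization then shows that the ${\kappa}^{+}$-covering of a cofinal sequence can be assembled from $\lambda$ many $\leq \kappa$-sized pieces, giving $\mathrm{pp}_{{\kappa}^{+}}(\lambda) = \lambda$ and closing the induction. I expect the truly delicate point to be tracking the threshold $\sigma$ uniformly — verifying that one single $\sigma < \theta$ simultaneously handles the continuity step, the cov-pp step, and the localization-based bound — since a priori each ingredient imposes its own lower bound on the admissible $\kappa$.
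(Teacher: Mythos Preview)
The paper does not prove this theorem at all: it is quoted as a known $\ZFC$ result of Shelah from \cite{sh:460}, with \cite{handbookPCF} and \cite{sh:829} listed as further references, and is invoked only as a black box to derive Corollary~\ref{cor:revisedgch}. There is therefore no proof in the paper against which to compare your attempt.

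That said, your sketch follows the broad architecture of the standard argument (induction on $\lambda$, translation of the weak power into a covering number, continuity when $\cf(\lambda) > \kappa$, the cov-versus-pp identity when $\cf(\lambda) \leq \kappa$, and a $\PCF$ bound using generators and localization), so the strategy is sound. Two technical slips are worth noting. First, your description of $\cov(\lambda, {\kappa}^{+}, {\kappa}^{+}, 2)$ does not match the parameter $2$: that last argument means ``covered by a single member'', whereas you describe covering by a union of $\leq \kappa$ members; the covering number actually matching ${\lambda}^{[\kappa]}$ as defined here (covering by $< \kappa$ members) is $\cov(\lambda, {\kappa}^{+}, {\kappa}^{+}, \kappa)$. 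Second, writing $\mathrm{pp}_{{\kappa}^{+}}(\lambda) = \lambda$ cannot be right, since for singular $\lambda$ one always has $\mathrm{pp}(\lambda) \geq {\lambda}^{+}$; what the argument really needs is a bound of the form $\mathrm{pp}_{\Gamma}(\lambda) \leq {\lambda}^{+}$ (or, more structurally, that the relevant $\mathrm{pcf}$ sets and generators are few enough to be absorbed by the inductive hypothesis), from which the equality ${\lambda}^{[\kappa]} = \lambda$ is then extracted after assembling the covering families. These are calibration errors rather than a wrong plan, but a full proof would need them straightened out; the cited references are the appropriate place for the details.
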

\begin{Cor} \label{cor:revisedgch}
  Let $\mu \geq {\beth}_{\omega}$ be any cardinal.
  There exists an uncountable regular cardinal $\theta < {\beth}_{\omega}$ and a family $\PPP \subset {\[\mu\]}^{\leq \theta}$ such that $\lc \PPP \rc \leq \mu$ and for each $u \in {\[\mu\]}^{\theta}$, there exists $v \in \PPP$ with the property that $v \subset u$ and $\lc v \rc \geq {\aleph}_{0}$.
\end{Cor}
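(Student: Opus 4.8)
The plan is to derive this corollary directly from Theorem~\ref{thm:revisedgch} applied to the strong limit uncountable cardinal $\beth_\omega$. First I would invoke the Revised $\GCH$ with the strong limit cardinal taken to be $\beth_\omega$ and with $\lambda = \mu \geq \beth_\omega$: this produces a cardinal $\sigma < \beth_\omega$ such that ${\mu}^{\[\kappa\]} = \mu$ for every $\sigma \leq \kappa < \beth_\omega$. Since $\sigma < \beth_\omega = \sup_{n < \omega}\beth_n$, I would then fix a regular uncountable cardinal $\theta$ with $\sigma \leq \theta < \beth_\omega$ --- for instance $\theta = {(\beth_n)}^{+}$ for any $n \geq 1$ large enough that $\sigma < \beth_n$, which lies below $\beth_{n+1} < \beth_\omega$. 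For this $\theta$ the conclusion of the Revised $\GCH$ yields ${\mu}^{\[\theta\]} = \mu$.

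Next, I would unwind Definition~\ref{def:weakpower}. The quantity ${\mu}^{\[\theta\]}$ is the minimum of a nonempty set of cardinals (the family ${\[\mu\]}^{\leq\theta}$ itself is a witness, since each $u \in {\[\mu\]}^{\theta}$ equals $\bigcup\{u\}$), so this minimum is attained. Hence I may fix a family $\PPP \subseteq {\[\mu\]}^{\leq\theta}$ with $\lc\PPP\rc = {\mu}^{\[\theta\]} = \mu$ such that every $u \in {\[\mu\]}^{\theta}$ is the union of some subfamily $\PPP_0 \subseteq \PPP$ with $\lc\PPP_0\rc < \theta$. This $\theta$ and this $\PPP$ are the objects whose existence is asserted, and it remains only to verify the last clause.

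To that end, fix an arbitrary $u \in {\[\mu\]}^{\theta}$ and choose $\PPP_0 \subseteq \PPP$ with $\lc\PPP_0\rc < \theta$ and $u = \bigcup\PPP_0$. Every $w \in \PPP_0$ then satisfies $w \subseteq u$, so it suffices to find some member of $\PPP_0$ that is infinite. If every $w \in \PPP_0$ were finite, then $u$ would be a union of at most $\lc\PPP_0\rc$ many finite sets, so $\lc u\rc \leq \max(\lc\PPP_0\rc,\aleph_0) < \theta$ because $\theta$ is regular and uncountable; this contradicts $\lc u\rc = \theta$. Therefore some $v \in \PPP_0$ has $\lc v\rc \geq \aleph_0$, and since $v \in \PPP_0 \subseteq \PPP$ and $v \subseteq u$, this $v$ is as required.

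There is no genuine obstacle here once Theorem~\ref{thm:revisedgch} is in hand; the only points that need a little care are selecting $\theta$ so that it is simultaneously regular, uncountable, and in the interval $[\sigma,\beth_\omega)$, and the elementary cardinal-arithmetic observation that a set of size $\theta$, with $\theta$ regular and uncountable, cannot be covered by fewer than $\theta$ finite sets.
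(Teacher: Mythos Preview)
Your proof is correct and follows essentially the same route as the paper's: apply the Revised $\GCH$ at ${\beth}_{\omega}$, pick an uncountable regular $\theta \in [\sigma, {\beth}_{\omega})$, take a witnessing family $\PPP$ for ${\mu}^{[\theta]} = \mu$, and use a regularity pigeonhole on any ${\PPP}_{0}$ covering a given $u$. The only cosmetic difference is that the paper observes directly that some $v \in {\PPP}_{0}$ must have $\lc v \rc = \theta$ (since $\lc {\PPP}_{0} \rc < \theta$ and $\theta$ is regular), whereas you argue only that some $v$ is infinite; both suffice for the stated conclusion.
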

\begin{proof}
${\beth}_{\omega}$ is a strong limit uncountable cardinal.
Therefore Theorem \ref{thm:revisedgch} applies and implies that there exists $\sigma < {\beth}_{\omega}$ such that for every $\sigma \leq \theta < {\beth}_{\omega}$, ${\mu}^{\[\theta\]} = \mu$.
It is possible to choose an uncountable regular cardinal $\theta$ satisfying $\sigma \leq \theta < {\beth}_{\omega}$.
Since ${\mu}^{\[\theta\]} = \mu$, there exists $\PPP \subset {\[\mu\]}^{\leq \theta}$ such that $\lc \PPP \rc = \mu$ and for each $u \in {\[\mu\]}^{\theta}$, there exists ${\PPP}_{0} \subset \PPP$ with the property that $\lc {\PPP}_{0} \rc < \theta$ and $u = \bigcup {\PPP}_{0}$.
Now suppose that $u \in {\[\mu\]}^{\theta}$ is given.
Let ${\PPP}_{0} \subset \PPP$ be such that $\lc {\PPP}_{0} \rc < \theta$ and $u = \bigcup {\PPP}_{0}$.
Since $\theta$ is a regular cardinal and $\lc u \rc = \theta$, it follows that $\lc v \rc = \theta \geq {\aleph}_{0}$, for some $v \in {\PPP}_{0}$.
This is as required because $v \in \PPP$ and $v \subset u$.
\end{proof}
The proof of the following theorem is similar to the proof of Cummings and Shelah's theorem from~\cite{sh541} that if $\kappa \geq {\beth}_{\omega}$, then $\dd(\kappa) = {\dd}_{\clb}(\kappa)$.
\begin{Theorem} \label{thm:case2}
  If $\kappa \geq {\beth}_{\omega}$, then $\dd(\kappa) \leq \rr(\kappa)$.
\end{Theorem}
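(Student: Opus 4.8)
The plan is to argue by cases, according to whether the hypothesis of Lemma \ref{lem:case1} holds. Fix an unreaped family $F \subset {\[\kappa\]}^{\kappa}$ with $\lc F \rc = \rr(\kappa)$; by Lemma \ref{lem:risbig}, $\rr(\kappa) \geq {\kappa}^{+}$. If some club ${E}_{1} \subseteq \kappa$ witnesses the hypothesis of Lemma \ref{lem:case1} for $F$ — that is, for every club $E \subseteq {E}_{1}$ some $A \in F$ has $A \: {\subset}^{\ast} \: \set(E, {E}_{1})$ — then that lemma immediately yields $\dd(\kappa) \leq \rr(\kappa)$. So assume throughout the negation, which we call $(\ast)$: for every club ${E}_{1} \subseteq \kappa$ there is a club $E \subseteq {E}_{1}$ such that $\lc A \setminus \set(E, {E}_{1}) \rc = \kappa$ for every $A \in F$.

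The first step is to recast $(\ast)$, together with the unreapedness of $F$, as a statement about how members of $F$ lie across club partitions. For $A \in {\[\kappa\]}^{\kappa}$ and a club ${E}_{1} \subseteq \kappa$ let ${J}^{{E}_{1}}_{A} = \{ \xi \in {E}_{1} : A \cap \set({E}_{1}, \xi) \neq 0 \}$, the set of ${E}_{1}$-points whose interval $\set({E}_{1}, \xi) = \[\xi, {s}_{{E}_{1}}(\xi))$ meets $A$. Given a club ${E}_{1}$, take the club $E \subseteq {E}_{1}$ supplied by $(\ast)$ and consider $B = \set(E, {E}_{1}) \in \Pset(\kappa)$: since $F$ is unreaped, $B$ does not reap $F$, and as $\lc A \setminus B \rc = \kappa$ for every $A \in F$ by $(\ast)$, this forces some $A \in F$ with $A \: {\subset}^{\ast} \: \kappa \setminus B$. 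Because the intervals $\set({E}_{1}, \xi)$ ($\xi \in {E}_{1}$) partition $\kappa$ above $\min({E}_{1})$ and $B$ is the union of those indexed by $E$, this says exactly that ${J}^{{E}_{1}}_{A} \cap E$ is bounded, so ${J}^{{E}_{1}}_{A}$ is non-stationary and there is a club $D \subseteq {E}_{1}$ disjoint from ${J}^{{E}_{1}}_{A}$. Thus, under $(\ast)$, \emph{for every club ${E}_{1} \subseteq \kappa$ there are $A \in F$ and a club $D \subseteq {E}_{1}$ with $D \cap {J}^{{E}_{1}}_{A} = 0$.} This already produces domination, modulo the size of the family. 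Fix $f \in {\kappa}^{\kappa}$, put ${E}_{f} = \{ \xi < \kappa : \xi \ \text{is closed under} \ f \}$ (a club), and apply the italicized statement with ${E}_{1} = {E}_{f}$ to get $A \in F$ and a club $D \subseteq {E}_{f}$ disjoint from ${J}^{{E}_{f}}_{A}$; set ${g}_{A, D}(\beta) = {s}_{A}({s}_{D}(\beta))$. For any $\beta < \kappa$, writing $\xi = {s}_{D}(\beta) \in D \subseteq {E}_{f}$, we have $\beta < \xi$ and $\xi$ closed under $f$, hence $f(\beta) < \xi \leq {s}_{A}(\xi) = {g}_{A, D}(\beta)$; so $f \: {\leq}^{\ast} \: {g}_{A, D}$. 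Consequently $\{ {g}_{A, D} : A \in F, D \ \text{a club} \}$ is $\ast$-dominating — but, there being $2^{\kappa}$ clubs, this family has size $2^{\kappa}$, and the crucial remaining task is to replace the clubs $D$ by data drawn from a family of size $\leq \rr(\kappa)$.

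This is exactly where the hypothesis $\kappa \geq {\beth}_{\omega}$ is used, through the revised $\GCH$, and it is the step most closely modeled on Cummings and Shelah's proof that $\dd(\kappa) = {\dd}_{\clb}(\kappa)$ for such $\kappa$. Apply Corollary \ref{cor:revisedgch} with $\mu = \rr(\kappa)$ to obtain an uncountable regular $\theta < {\beth}_{\omega} \leq \kappa$ and a family $\PPP \subseteq {\[\rr(\kappa)\]}^{\leq \theta}$ with $\lc \PPP \rc \leq \rr(\kappa)$ such that every subset of $\rr(\kappa)$ of size $\theta$ contains a countably infinite $v \in \PPP$. Writing $F = \{ {A}_{\eta} : \eta < \rr(\kappa) \}$, one attaches to each countably infinite $v \in \PPP$ a club ${D}_{v}$ canonically built from $\langle {A}_{\eta} : \eta \in v \rangle$ — for instance the topological closure in $\kappa$ of $\bigcup \{ {A}_{\eta} : \eta \in v \}$, which is a genuine club since $\cf(\kappa) > \omega$ — and aims to show that, under $(\ast)$, for every $f$ there is a $\theta$-sized reservoir $G \subseteq \rr(\kappa)$ such that for every countably infinite $v \subseteq G$ the club ${D}_{v}$ lies inside ${E}_{f}$ modulo a bounded set and the appropriate pair misses the pertinent ${J}$-set, so that the estimate of the previous paragraph applies. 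The covering property of $\PPP$ then hands us such a $v$, and the resulting function belongs to the fixed family $\{ {g}_{{A}_{\eta}, {D}_{v}} : \eta < \rr(\kappa), v \in \PPP \}$ of size $\leq \rr(\kappa) \cdot \lc \PPP \rc = \rr(\kappa)$; since $f$ was arbitrary this family is $\ast$-dominating, whence $\dd(\kappa) \leq \rr(\kappa)$.

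The genuinely new difficulty — beyond what Lemma \ref{lem:case1} already handles — is precisely this compression: extracting, uniformly in $f$, a $\theta$-sized reservoir of indices whose countable sub-selections each close up to a club almost contained in ${E}_{f}$ and avoiding the relevant ${J}$-set. This is where the full strength of $(\ast)$ must be spent (applied not only to ${E}_{f}$ but, if necessary, iterated along an ordinal below $\kappa$ to manufacture enough indices), and it is the analogue of the corresponding step in the Cummings--Shelah argument for ${\dd}_{\clb}(\kappa) = \dd(\kappa)$. Everything else — the reformulation of $(\ast)$, the pointwise domination computation, and the bookkeeping with $\PPP$ — should be routine.
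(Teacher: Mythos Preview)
Your proposal correctly isolates the case split and the role of the revised $\GCH$, but the compression step you flag as the crux is left unproved, and the specific device you suggest for it cannot work. You define ${D}_{v}$ as the closure of $\bigcup_{\eta \in v} {A}_{\eta}$; this club depends only on $v$ and is entirely independent of $f$, so there is no mechanism to force it almost inside ${E}_{f}$. Worse, your own domination calculation shows that ${g}_{A,D}$ dominates $f$ whenever $D \: {\subset}^{\ast} \: {E}_{f}$ \emph{regardless of} $A$ --- the set $A$ contributes nothing beyond ${s}_{A}(\xi) \geq \xi$ --- so what you are really asking for is a family of at most $\rr(\kappa)$ clubs cofinal in the club filter, and nothing in your outline produces one. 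The suggestion to iterate $(\ast)$ ``along an ordinal below $\kappa$'' is the right instinct, but it is not carried out, and carrying it out is the entire content of the proof.

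The paper fills this gap with a different organization. One takes $\PPP \subset {[\theta \times F]}^{\leq \theta}$ rather than ${[\rr(\kappa)]}^{\leq \theta}$, and for a given $f$ iterates $(\ast)$ exactly $\theta$ times to produce nested clubs ${E}_{i} \supseteq {E}^{1}_{i} \supseteq {E}_{i+1}$ together with ${B}_{i} \in F$ satisfying ${B}_{i} \: {\subset}^{\ast} \: \kappa \setminus \set({E}^{1}_{i}, {E}_{i})$. The graph $u = \{(i, {B}_{i}) : i < \theta\}$ then contains some $v \in \PPP$, from which one extracts $d \subset \dom(v)$ of order type $\omega$, all inside a fixed elementary submodel $M \prec H(\chi)$ of size $\rr(\kappa)$. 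The dominating function $g$ is defined \emph{solely} from $\langle {B}_{i} : i \in d \rangle$ --- no clubs enter its definition --- by requiring $g(\alpha) > \min({B}_{i} \setminus (\alpha + 1))$ for every $i \in d$; since the defining data lies in $M$, so does $g$. The verification that $f(\alpha) < g(\alpha)$ for large $\alpha$ rests on well-foundedness: the sequence ${\xi}_{n} = \sup({E}_{{i}_{n}} \cap (\alpha + 1))$ is weakly decreasing in $n$, hence eventually constant at some $\xi$ for $n \geq N$; then $\xi \in {E}_{{i}_{N+1}} \subset {E}^{1}_{{i}_{N}}$, and the point $\beta \in {B}_{{i}_{N}} \cap (\alpha, g(\alpha))$, which must avoid $\set({E}^{1}_{{i}_{N}}, {E}_{{i}_{N}})$, is forced past ${s}_{{E}_{{i}_{N}}}(\xi) \in {E}_{0} = {E}_{f}$, giving $f(\alpha) < \beta < g(\alpha)$. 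The clubs are thus used only in the verification, never in the definition of $g$, and this decoupling is precisely what your scheme of canonical clubs ${D}_{v}$ fails to achieve.
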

\begin{proof}
  Write $\mu = \rr(\kappa)$.
  Let $F \subset {\[\kappa\]}^{\kappa}$ be such that $F$ is unreaped and $\lc F \rc = \mu$.
  Then ${\beth}_{\omega} \leq \kappa < {\kappa}^{+} \leq \rr(\kappa) = \mu$.
  So applying Corollary \ref{cor:revisedgch}, fix an uncountable regular cardinal $\theta < {\beth}_{\omega}$ satisfying the conclusion of Corollary \ref{cor:revisedgch}.
  Note that $\lc \theta \times \mu \rc = \mu$ because $\theta < {\beth}_{\omega} < \mu$.
  So $\lc \theta \times F \rc = \mu$.
  Therefore applying Corollary \ref{cor:revisedgch}, find a family $\PPP \subset {\[\theta \times F\]}^{\leq \theta}$ such that $\lc \PPP \rc \leq \mu$ and $\PPP$ has the property that for each $u \in {\[\theta \times F\]}^{\theta}$, there exists $v \in \PPP$ satisfying $v \subset u$ and $\lc v \rc \geq {\aleph}_{0}$.
  Put $X = F \cup \mu \cup \PPP \cup \{\theta, \mu, \kappa, {\kappa}^{\kappa}, \Pset(\kappa)\}$.
  Then $\lc X \rc = \mu$, and so if $\chi$ is a sufficiently large regular cardinal, then there exists $M \prec H(\chi)$ with $\lc M \rc = \mu$ and $X \subset M$.
  We will aim to prove that $M \cap {\kappa}^{\kappa}$ is a dominating family.
  
  In view of Lemma \ref{lem:case1} it may be assumed that for any club ${E}_{1} \subset \kappa$, there exists a club ${E}_{2} \subset {E}_{1}$ such that for all $B \in F$, $B \: {\not\subset}^{\ast} \: \set({E}_{2}, {E}_{1})$.
  Since $F$ is an unreaped family and since $\set({E}_{2}, {E}_{1}) \in \Pset(\kappa)$ whenever ${E}_{2} \subset {E}_{1}$ are both clubs in $\kappa$, it follows that for each club ${E}_{1} \subset \kappa$, there exist a club ${E}_{2} \subset {E}_{1}$ and a $B \in F$ such that $B \: {\subset}^{\ast} \: \kappa \setminus \set({E}_{2}, {E}_{1})$.
  Let $f \in {\kappa}^{\kappa}$ be a fixed function.
  Construct a sequence $\langle \pr{{E}_{i}}{{E}^{1}_{i}, {B}_{i}}: i < \theta \rangle$ by induction on $i < \theta$ so that the following conditions are satisfied at each $i < \theta$:
  \begin{enumerate}
    \item
    ${E}_{i}$ and ${E}^{1}_{i}$ are both clubs in $\kappa$, ${E}^{1}_{i} \subset {E}_{i}$, and $\forall j < i\[{E}_{i} \subset {E}^{1}_{j}\]$;
    \item
    ${B}_{i} \in F$ and ${B}_{i} \: {\subset}^{\ast} \: \kappa \setminus \set({E}^{1}_{i}, {E}_{i})$;
    \item
    if $i = 0$, then ${E}_{i} = \{\alpha < \kappa: \alpha \ \text{is closed under} \ f \}$.
  \end{enumerate}
  We first show how to construct such a sequence.
  When $i = 0$, put ${E}_{i} = \{\alpha < \kappa: \alpha \ \text{is closed under} \ f \}$.
  Then ${E}_{i}$ is a club in $\kappa$, and so there exist a club ${E}^{1}_{i} \subset {E}_{i}$ and a ${B}_{i} \in F$ with ${B}_{i} \: {\subset}^{\ast} \: \kappa \setminus \set({E}^{1}_{i}, {E}_{i})$.
  Next suppose that $\theta > i > 0$ and that $\langle \pr{{E}_{j}}{{E}^{1}_{j}, {B}_{j}}: j < i \rangle$ satisfying (1)--(3) is given.
  Then $\{{E}^{1}_{j}: j < i\}$ is a collection of $\leq \lc i \rc \leq i < \theta < {\beth}_{\omega} \leq \kappa$ many clubs in $\kappa$.
  Therefore ${E}_{i} = {\bigcap}_{j < i}{{E}^{1}_{j}}$ is a club in $\kappa$.
  We have $\forall j < i\[{E}_{i} \subset {E}^{1}_{j}\]$ and moreover there exist a club ${E}^{1}_{i} \subset {E}_{i}$ and a ${B}_{i} \in F$ such that ${B}_{i} \: {\subset}^{\ast} \: \kappa \setminus \set({E}^{1}_{i}, {E}_{i})$.
  It is clear that ${E}_{i}, {E}^{1}_{i}$, and ${B}_{i}$ are as required.
  This completes the construction of the sequence $\langle \pr{{E}_{i}}{{E}^{1}_{i}, {B}_{i}}: i < \theta \rangle$.
  
  Now define a function $u: \theta \rightarrow F$ by setting $u(i) = {B}_{i}$ for all $i \in \theta$.
  Then $u \subset \theta \times F$ and $\lc u \rc = \lc \dom(u) \rc = \theta$.
  Hence by the choice of $\PPP$ and $M$, there exists $v \in \PPP \subset X \subset M$ such that $v \subset u$ and $\lc v \rc \geq {\aleph}_{0}$.
  $v$ is a function and $c = \dom(v) \subset \dom(u) = \theta$.
  Moreover, ${\aleph}_{0} \leq \lc v \rc = \lc c \rc$ and $c \in M$.
  Hence we can find $d \in M$ so that $d \subset c$ and $\otp(d) = \omega$.
  Let $w = v \restrict d \in M$.
  Since $\kappa > \omega$ is regular, there exists a function $g \in {\kappa}^{\kappa}$ with the property that for each $\alpha \in \kappa$, $\forall i \in d \exists \beta \in w(i) = {B}_{i}\[\alpha < \beta < g(\alpha)\]$.
  We may further assume that $g \in M$ because all of the relevant parameters belong to $M$.
  Let $\seq{i}{n}{\in}{\omega}$ be the strictly increasing enumeration of $d$.
  Recall that for each $n \in \omega$, ${E}^{1}_{{i}_{n}} \subset {E}_{{i}_{n}} \subset \kappa$ are both clubs in $\kappa$ and that ${B}_{{i}_{n}} \: {\subset}^{\ast} \: \kappa \setminus \set({E}^{1}_{{i}_{n}}, {E}_{{i}_{n}})$.
  In particular, for each $n \in \omega$, there exists ${\delta}_{n} \in \kappa$ so that ${B}_{{i}_{n}} \setminus {\delta}_{n} \subset \kappa \setminus \set({E}^{1}_{{i}_{n}}, {E}_{{i}_{n}})$, and also $\min({E}_{{i}_{n}}) \in \kappa$.
  Hence $\{{\delta}_{n}: n \in \omega\} \cup \{\min({E}_{{i}_{n}}): n \in \omega\}$ is a countable subset of $\kappa$, whence $\{{\delta}_{n}: n \in \omega\} \cup \{\min({E}_{{i}_{n}}): n \in \omega\} \subset \delta$, for some $\delta \in \kappa$.
  We will argue that for each $\alpha \in \kappa$, if $\alpha \geq \delta$, then $f(\alpha) < g(\alpha)$.
  To this end, let $\alpha \in \kappa$ be fixed, and assume that $\delta \leq \alpha$.
  For each $n \in \omega$, since ${E}_{{i}_{n}} \subset \kappa$ is a club in $\kappa$ and since $\min({E}_{{i}_{n}}) < \delta \leq \alpha < \alpha + 1 < \kappa$, it follows that ${\xi}_{n} = \sup({E}_{{i}_{n}} \cap \left( \alpha + 1 \right) ) \in {E}_{{i}_{n}}$.
  Also $\forall n \in \omega\[{\xi}_{n + 1} \leq {\xi}_{n}\]$ because $\forall n \in \omega\[{E}_{{i}_{n + 1}} \subset {E}_{{i}_{n}}\]$.
  It follows that there exist $\xi$ and $N \in \omega$ such that $\forall n \geq N\[{\xi}_{n} = \xi\]$.
  Note that $\xi \in {E}_{{i}_{N + 1}} \subset {E}^{1}_{{i}_{N}}$.
  Consider ${s}_{{E}_{{i}_{N}}}(\xi)$.
  ${s}_{{E}_{{i}_{N}}}(\xi) \in {E}_{{i}_{N}}$ and ${s}_{{E}_{{i}_{N}}}(\xi) > \xi = {\xi}_{N} = \sup({E}_{{i}_{N}} \cap \left( \alpha + 1 \right) )$.
  Therefore ${s}_{{E}_{{i}_{N}}}(\xi) \geq \alpha + 1 > \alpha$.
  Since ${s}_{{E}_{{i}_{N}}}(\xi) \in {E}_{{i}_{N}} \subset {E}_{0}$, ${s}_{{E}_{{i}_{N}}}(\xi)$ is closed under $f$.
  Therefore $f(\alpha) < {s}_{{E}_{{i}_{N}}}(\xi)$.
  Next by the choice of $g$, there exists $\beta \in {B}_{{i}_{N}}$ with $\alpha < \beta < g(\alpha)$.
  Note that ${\delta}_{N} < \delta \leq \alpha < \beta$.
  Hence $\beta \in {B}_{{i}_{N}} \setminus {\delta}_{N} \subset \kappa \setminus \set({E}^{1}_{{i}_{N}}, {E}_{{i}_{N}})$, in other words, $\beta \notin \set({E}^{1}_{{i}_{N}}, {E}_{{i}_{N}})$.
  Note that $\xi = \sup({E}_{{i}_{N}} \cap \left( \alpha + 1 \right) ) \leq \alpha < \beta$.
  Since $\xi \in {E}^{1}_{{i}_{N}}$, $\beta \geq {s}_{{E}_{{i}_{N}}}(\xi)$.
  Putting all this information together, we have $f(\alpha) < {s}_{{E}_{{i}_{N}}}(\xi) \leq \beta < g(\alpha)$, as required.
  
  Thus we have proved that $f \: {\leq}^{\ast} \: g$.
  Since $f \in {\kappa}^{\kappa}$ was arbitrary and since $g \in M \cap {\kappa}^{\kappa}$, we have proved that $M \cap {\kappa}^{\kappa}$ is a dominating family.
  Therefore $\dd(\kappa) \leq \lc M \cap {\kappa}^{\kappa} \rc \leq \lc M \rc = \mu = \rr(\kappa)$.
\end{proof}
\section{Questions} \label{sec:questions}
It is unknown how large $\bb(\kappa)$ needs to be for the configuration $\bb(\kappa) < \inva(\kappa)$ to be consistent.
So we ask
\begin{Question} \label{q:b<a}
  Does $\bb(\kappa) = {\kappa}^{++}$ imply that $\inva(\kappa) = {\kappa}^{++}$, for every regular cardinal $\kappa > \omega$?
\end{Question}
It is not possible to step-up the proof of Theorem \ref{thm:bandakappa} in any straightforward way.
If Question \ref{q:b<a} has a positive answer, then the proof is likely to involve quite a different argument.

Theorem \ref{thm:case2} of course gives no information about the relationship between $\dd(\kappa)$ and $\rr(\kappa)$ when $\kappa < {\beth}_{\omega}$.
\begin{Question} \label{q:rsmall}
  If $\omega < \kappa < {\beth}_{\omega}$ is a regular cardinal, then does $\dd(\kappa) \leq \rr(\kappa)$ hold?
  In particular, is $\dd({\aleph}_{n}) \leq \rr({\aleph}_{n})$, for all $1 \leq n < \omega$?
\end{Question}
In trying to tackle this problem, it may seem reasonable to first try to produce a model where $\rr({\aleph}_{n}) < {2}^{{\aleph}_{n}}$, for if $\rr({\aleph}_{n})$ is provably equal to ${2}^{{\aleph}_{n}}$, then of course $\dd({\aleph}_{n}) \leq \rr({\aleph}_{n})$.  
This is closely related to a well-known question of Kunen about the minimal size of a base for a uniform ultrafilter on ${\aleph}_{1}$.
\begin{Question} \label{q:kunen}
  Is $\rr({\aleph}_{1}) < {2}^{{\aleph}_{1}}$ consistent?
  Is $\uu({\aleph}_{1}) < {2}^{{\aleph}_{1}}$ consistent?
\end{Question}
\def\polhk#1{\setbox0=\hbox{#1}{\ooalign{\hidewidth
  \lower1.5ex\hbox{`}\hidewidth\crcr\unhbox0}}}
\providecommand{\bysame}{\leavevmode\hbox to3em{\hrulefill}\thinspace}
\providecommand{\MR}{\relax\ifhmode\unskip\space\fi MR }
\providecommand{\MRhref}[2]{%
  \href{http://www.ams.org/mathscinet-getitem?mr=#1}{#2}
}
\providecommand{\href}[2]{#2}

\end{document}